\newtheorem{theorem}{Theorem}[section]
\newtheorem*{theorem*}{Theorem}
\newtheorem{lemma}[theorem]{Lemma}
\newtheorem{corollary}[theorem]{Corollary}
\newtheorem{proposition}[theorem]{Proposition}
\newtheorem{remark}[theorem]{Remark}
\DeclareMathOperator{\Div}{div}
\DeclareMathOperator{\ric}{Ric}
\begin{document}

\title[Foliations by spacelike hypersurfaces on Lorentz manifolds]{Foliations by spacelike hypersurfaces on Lorentz manifolds}

\author[Euripedes Carvalho da Silva]{Euripedes Carvalho da Silva}
\address[Euripedes C. Silva]{Instituto Federal de Educa\c c\~ao, Ci\^encia e Tecnologia do Cear\'a, 
Avenida Parque Central, 1315, CEP 61939-140, Maracana\'u, Cear\' a, Brazil}
\email{euripedescarvalhomat@gmail.com} 

\author[Rosa Maria dos Santos Barreiro Chaves]{Rosa Maria dos Santos Barreiro Chaves}
\address[Rosa M. B. Chaves]{Departamento de Matem\'atica - Universidade de S\~ao Paulo, Rua do Mat\~ao, 1010, CEP 05508-090, S\~ao Paulo, S\~ao Paulo, Brazil}
\email{rosab@ime.usp.br}

\keywords{Foliations, Hypersurfaces, Lorentz manifolds}
%
%\subjclass[2010]{}
\thanks{Partially supported by Coordination for the Improvement of Higher Level -or Education- Personnel (CAPES), Brazil}

\begin{abstract}
In this work, we study the geometric properties of spacelike foliations by hypersurfaces on a Lorentz manifold. 
We find an equation that relates the foliation with the ambient manifold and applies it to investigate conditions for the leaves being totally umbilical or geodesic. Using the Maximum principle with the mentioned equation we obtain an obstruction for the existence of 
totally geodesic spacelike foliations in a spacetime with positive Ricci curvature on the direction $N$.
\end{abstract}

\maketitle

\section{Introduction}

The notion of a foliation by hypersurfaces on Riemannian manifolds have been studied by many authors and for our purposes, we point out the works  \cite{lucas, gomes, montielriem}. They focus on the geometry of the leaves in order to answer if the leaves are totally geodesic, umbilical or stable hypersurfaces,  between some other properties. The totally geodesic foliations were studied in  \cite{abe, lucas, gomes}. We highlight the work of Barbosa et. al in \cite{lucas} where they proved that a foliation whose leaves have constant mean curvature should be totally geodesic, and such a foliation does not exist in the sphere. 

The analysis of umbilicity can be found in \cite{gomesumb, montielriem}. Gomes \cite{gomesumb} shows that the existence of an umbilical leaf in a Riemannian manifold of constant sectional curvature implies the umbilicity of the foliation, that is, all leaves are umbilical hypersurfaces. Also, Montiel \cite{montielriem}, shows that the existence of a closed conformal vector field implies the existence of an umbilical foliation.

A Lorentz manifold $(M,g_{-1})$ of dimension $n+1$ is a smooth manifold endowed  with  a pseudo-Riemannian metric $g_{-1}$ of signature $(+ + \cdots + -)$. So, tangent vectors $X_p \in T_pM$ are called spacelike, lightlike or timelike if $g_{-1}(X_p,X_p)$ is positive, equal to zero or negative, respectively. We say that a Lorentz manifold $M$ is time-oriented if there is a vector field $X \in \Gamma(TM)$ such that at all points $p \in M$, the vector $X_p$ is timelike. A hypersurface $L$ of a Lorentz manifold $M$ is said to be spacelike if the metric on $L$, induced by the metric of $M$, is positive definite, that is, the induced metric on $L$ is Riemannian.

The interest of studying spacelike foliations by hypersurfaces on a Lorentz manifold is well known not only from the mathematical point of view but also from the physical one, because of their role in different problems in General Relativity (see \cite{aliasgervasio, barbosalorentz, henrique, montiel}). In  1993, Barbosa and Oliker \cite{barbosalorentz} considered the same problem for spacelike hypersurfaces with constant mean curvature in the Lorentzian context and they proved that such hypersurfaces are also the critical point of the area functional for spacelike variations that preserve volume. In 1999 Montiel \cite{montiel}, studied spacelike foliations on  Lorentz manifolds and showed that the existence of a timelike closed and conformal vector field on such a manifold furnishes a totally umbilical spacelike foliation.  

Taking into account the motivations given above, the following questions appear naturally: {\it For a given foliation by spacelike hypersurfaces on a Lorentz manifold what conditions should be imposed on the leaves to be umbilical or totally geodesic? Are there some obstructions for the existence of a  foliation by spacelike hypersurfaces on a Lorentz manifold? What conditions imply the existence of such a foliation? } \\
In this work, we analyze and answer the questions above. 

In Section $2$ we state some preliminaries.

In Section  $3$, we find a key equation that relates the principal curvatures of the leaves and the curvature of the ambient space. 
Denoting by $\mathbb{Q}^{n+1}_1(c)$ a spacetime with constant sectional curvature $c$ which is timelike geodesically complete, we answer the first question by proving the following result.
{\it Let  $\mathcal{F}$ be a foliation by spacelike hypersurfaces of $\mathbb{Q}^{n+1}_1(c)$. If the normal field to the leaves of $\mathcal{F}$ is geodesic and if there exists a totally umbilical leaf of $\mathcal{F}$ then all  leaves of $\mathcal{F}$ are totally umbilical.} 

In Section $4$ we use the key equation mentioned above to get a differential equation that is crucial to answering the other questions proposed. It appears in the result below. \\
{\it Let $\mathcal{F}$ be a spacelike foliation by hypersurfaces on a Lorentz manifold $M$ and let $N$ be a unit timelike vector field normal to the leaves of the foliation $\mathcal{F}$ in some open set $U$ of $M$. Then on $U$ we have
	\begin{equation*} 
		\Div_L(\nabla_{N}N)=nN(H)-\|\mathcal{B}\|^2+n\ric(N)+\|\nabla_NN\|^2,
	\end{equation*}
	where $\nabla$ is the Levi-Civita connection of $M$, $H$ is the mean curvature of the foliation $\mathcal{F}$ and $\|\mathcal{B}\|^2$ is the squared norm of the second fundamental form of the leaves of $\mathcal{F}$.}

Now we define the number $\mathfrak{G}_{\mathcal{F}}$ by:    
\begin{eqnarray*} \label{eq1.1}
	\mathfrak{G}_{\mathcal{F}}= \inf_{M} \left\{ \frac{1}{n} {\rm div}_{\mathcal{F}}(\nabla_{N}N)-{\rm Ric}(N)-\frac{2}{n}\|\nabla_NN\|^2\right\}.
\end{eqnarray*}
Using this number we estimate the mean curvature function and we characte\-ri\-ze spacelike foliations on Lorentz manifolds. 

{\it Let $\mathcal{F}$ be a spacelike foliation by hypersufaces on a timelike geodesically complete Lorentz manifold $M$. Then 
	\begin{align*}
		\mathfrak{G}_{\mathcal{F}} \leq 0.\\
		H^2 \leq -\mathfrak{G}_{\mathcal{F}}
	\end{align*}
	and
	$$\mathcal{F} \ \mbox{is totally geodesic} \Leftrightarrow \mathfrak{G}_{\mathcal{F}}=0.$$}

Now  let us define the anti-de Sitter space. Let $\mathbb{R}_2^{n+2}$ denote an $(n+2)$-dimensional real vector space endowed with an inner product of index $2$ given by $g_{-2} (x,y)= - \sum_{i=1}^2 {x_i}{y_i} + \sum_{j=3}^{n+2}  {x_j}{y_j}$, where $x=(x_1, x_2, \cdots x_{n+2})$ and $y=(y_1, y_2, \cdots y_{n+2})$. The anti-de Sitter space is given by the set 
$$\mathbb{H}_1^{n+1} = \{ x \in \mathbb{R}_2^{n+2} : g_{-2}(x,x) = c, \ \mbox{where} \ \ c<0 \}.$$
It is known that $\mathbb{H}_1^{n+1}$ has constant sectional curvature $c$. 

As a corollary of the previous result, we obtained also the following result, which is an obstruction to the existence of spacelike foliations in the anti de Sitter space.

{\it Let $\mathcal{F}$ be a spacelike foliation by hypersurfaces of a complete Lorentz manifold $M$ with constant  sectional curvature $c$ and with a timelike vector field $N$ normal to the leaves of $\mathcal{F}$. If $N$ is a geodesic flow, then
	\begin{enumerate}
		\item $c\geq 0$;
		\item $H^2\leq c$.
\end{enumerate}}

In Section  $5$, we define $\mathcal{L}^1(M)$ to be the space of Lebesgue integrable functions on a Riemannian manifold $M$. We say that $\|X\| \in \mathcal{L}^1(\mathcal{F})$ if and only if $\|X^{\top}\| \in \mathcal{L}^1(L)$ for each leaf $L$ of the foliation $\mathcal{F}$. Using equation \eqref{divL}, we get an obstruction for the existence of a spacelike foliation by hypersurfaces on a Lorentz manifold. 

{\it  Let $M$ be a time-oriented Lorentz manifold with negative Ricci curvature. Let $\mathcal{F}$ be spacelike foliation by hypersurfaces of $M$ with $\| \nabla_N N\| \in \mathcal{L}^1(\mathcal{F})$. Then $\mathcal{F}$ cannot have all leaves totally geodesic with one being complete. }

Let $\mathbb{R}^{n+2}$ be a $(n+2)$-dimensional Eucledian space. The Lorentz-Minkowski space $\mathbb{R}^{n+2}_1=(\mathbb{R}^{n+2},g_{-1})$, where 
$$g_{-1}(v,w)=\sum_{i=1}^{n+1}{v_iw_i-v_{n+2}w_{n+2}},$$
$v=(v_1,...,v_{n+2})$ and $w=(w_1,...,w_{n+2})\in \mathbb{R}^{n+2}$. The de Sitter space is the hyperquadric in $\mathbb{R}^{n+2}_1$ given by $\mathbb{S}^{n+1}_1=\{x\in \mathbb{R}^{n+2}_1; g_{-1}(x,x)=c, c>0\}$ which is a Lorentz manifold of constant sectional curvature $c$, with respect to the induced metric.

{\it Let $\mathcal{F}$ be a spacelike foliation by hypersurfaces on de Sitter space with $\| \nabla_N N\| \in \mathcal{L}^1(\mathcal{F})$. Then $\mathcal{F}$ cannot have all leaves totally geodesic with one being complete.}

Now we assume that $\mathfrak{G}_{\mathcal{F}}$ is finite and uses the maximum principle due to Omiri-Yua to get the next result.  

{\it Let  $\mathcal{F}$ be a spacelike foliation on a timelike geodesically complete Lorentz manifold $M$. If $L$ is a complete leaf of $\mathcal{F}$ with Ricci curvature bounded from below then there exists a sequence of points  $\{p_k\} \in L$ such that  
	
	\begin{enumerate}
		\item $\displaystyle{\lim_{k\to \infty}H_{L}(p_k)}=\sup_{L} H_L$;
		\item $\displaystyle{\lim_{k\to \infty}\|\nabla H_{L}(p_k)\|}=0$;
		\item $\displaystyle{\lim_{k\to \infty}\Delta H_L(p_k)}\leq 0$.
	\end{enumerate}
}

\section{Preliminaries}

In this section, we introduce some basic facts and notations that will appear in the paper.

Let $M$ be an $(n+1)$-dimensional Lorentz manifold endowed with a semi-Riemannian metric $g_{-1}(\cdot,\cdot)=\sum{\epsilon_A\omega_A^2}$, where  $g_{-1}(e_A,e_A)=\epsilon_A=\pm 1$ and $\mathcal{F}$ is a spacelike foliation of codimension one on $M$. 

For a given point  $p\in M$ we can choose an orthonormal frame $\{e_1,...e_n,e_{n+1}\}$ defined around  $p$ such that the vectors $e_1,...e_n$ are tangent to the leaves of  $\mathcal{F}$ and  $e_{n+1}$ is normal to them. Such a frame is usually called an adapted frame. We use the following convention of indices:
$$1 \leq A,B,C\leq n+1, \ \ \ 1 \leq i,j,k\leq n.$$

Taking the correspondent dual coframe
$$\{\omega_1,...,\omega_n,\omega_{n+1}\},$$ 
the  structure equations on  $M$ are given by 

\begin{equation}
d\omega_A=\sum_{B=1}^{n+1}{\epsilon_B\omega_B\wedge \omega_{BA}}, \ \ \omega_{AB}+\omega_{BA}=0
\end{equation}

\begin{equation}
d\omega_{AB}=\sum_{C=1}^{n+1}{\epsilon_C\omega_{AC}\wedge \omega_{CB}}+\Omega_{AB},
\end{equation}
where 
 
\begin{equation}
\Omega_{AB}=-\frac{1}{2}\sum_{C,D=1}^{n+1}{\epsilon_C\epsilon_D R_{ABCD}\omega_{C}\wedge \omega_{D}}, \ \ R_{ABCD}+R_{ABDC}=0.
\end{equation}
The Ricci curvature on the direction  $e_{n+1}$  is
\begin{equation}
\ric(e_{n+1})= -\sum_{i=1}^{n}{\langle R(e_{n+1}, e_i) e_{n+1},e_i\rangle}.   
\end{equation}

Let  $\nabla$ be the Levi-Civita connection on $M$.
Then for any tangent field $X$, we get
\begin{equation}
\nabla_X{e_A}=\sum_{B=1}^{n+1}{\epsilon_{B}\omega_{AB}(X) e_B}.
\end{equation}

Now let  $\theta_A$ and  $\theta_{AB}$ denote the restrictions of forms  $\omega_A$ and  $\omega_{AB}$ to the tangent vectors of the leaves of $\mathcal{F}$. Then it is obvious that
\begin{equation}
\theta_{n+1}=0 \ \ \mbox{e} \ \ \theta_{i}=\omega_{i},
\end{equation}

Since  $\theta_{n+1}=0$ we obtain by the structure equations  

$$0=d\theta_{n+1}=\sum_{B=1}^{n+1}{\theta_B\wedge \theta_{B n+1}}=\sum_{i=1}^n{\theta_{i}\wedge \theta_{i n+1}}.$$

By Cartan's equation, we have
\begin{equation}
\theta_{n+1i}=-\sum_{j=1}^{n}{h_{ij}\theta_j}, \ \ h_{ij}=h_{ji}.
\end{equation}

The second fundamental form  $\mathcal{B}$ of the leaves is then given by
\begin{equation}
\mathcal{B}=\sum_{i=1}^n{ \theta_i \otimes \theta_{i n+1}}=\sum_{i,j=1}^{n}{ h_{ij} \theta_i \otimes \theta_{j} }.
\end{equation}

The squared norm of $\mathcal{B} $ is defined by 
$$\left\| \mathcal {B} \right\|^2=\sum_{i,j=1}^{n} h_{ij}^{2}.$$

The mean curvature vector is  

\begin{equation}
\vec{H}= \frac{1}{n}tr(A)e_{n+1},
\end{equation}
where  $A$ is the Weingarten operator and the mean curvature function corresponding to $e_{n+1}$ is $H=\frac{-1}{n}tr(A)$.

Observe that the sign of  $H$ depends on the choice of $e_{n+1}$. The vector field defined locally by  $\vec{H}$ is globally defined on each leaf of $\mathcal{F}$. As a consequence, if  $H\neq 0$ at each point of the leaf, then the leaf is oriented. If $N$ is a unit timelike vector field normal to the leaves of $\mathcal{F}$ we can choose an adapted frame on an open set in such a way that $N=e_{n+1}$. The mean curvature of the leaf is exactly the mean curvature on the direction of $N$.

The  divergent of a vector field  $V$ defined  locally over  $M$ is defined by 
\begin{equation}
	\Div(V)=\sum_{A=1}^{n+1}{\epsilon_Ag_{-1}(\nabla_{e_A}V,e_A)}.
\end{equation}
For a vector field   tangent to the leaves of  $\mathcal{F}$ the divergent along the leaves  can be computed by
\begin{equation}
	{\Div}_L(V)=\sum_{i=1}^{n}{g_{-1}(\nabla_{e_i}V,e_i)}.
\end{equation}

\section{Umbilical foliations by spacelike hypersufaces on Lorentz manifolds}

We now turn our attention to umbilical foliations by spacelike hypersurfaces to investigate the effect of the curvature of the ambient on the leaves. We remark that, for Riemannian manifolds of constant curvature, this problem has been considered in \cite{gomesumb}.

As before, $M$ is an $(n+1)-$dimensional Lorentz manifold and $\mathcal{F}$ a spacelike foliation by hypersurfaces on $M$. Then we can choose a unit timelike vector field $N$, defined on $M$, that is normal to the leaves of $\mathcal{F}$.

\begin{proposition} 
\label{propone}
Let  $\mathcal{F}$ be a spacelike foliation by hypersurfaces on a $n+1$-dimensional  Lorentz manifold  $M$ and let $N$ be a timelike unit vector field normal to the leaves of $\mathcal{F}$. Suppose that $\{e_1,...,e_n,e_{n+1}\}$ is an adapted frame field for a neighbourhood $U\subset M$ such that $e_{n+1}=N$. Then on $U$ we get the following identity:
\begin{eqnarray*}
(dx_{i}+\sum_j{x_j\omega_{ji}})(e_{k}) & = & x_ix_k-\sum_j{h_{ij}h_{jk}}-R_{n+1 i n+1 k}\\
& - & (dh_{ik}+\sum_j{h_{jk}\omega_{ji}}+\sum_j{h_{ij}\omega_{jk}})(e_{n+1}).
\end{eqnarray*}  
where $x_i=g_{-1}(\nabla_N N,e_i)$, $1\leq i\leq n$ and $R_{n+1 i n+1 k}=g_{-1}(R(N,e_i)N,e_k) $, $1\leq i,k\leq n$.
\end{proposition}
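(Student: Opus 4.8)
The plan is to extract the identity from the two structure equations applied to the single connection form $\omega_{n+1\,i}$, evaluated on the pair $(e_k,e_{n+1})$. First I would record the two facts that pin down $\omega_{n+1\,i}$ on the adapted frame. Since $N=e_{n+1}$ is unit timelike, $g_{-1}(N,N)=-1$ forces $g_{-1}(\nabla_N N,N)=0$, so $\nabla_N N$ is tangent to the leaves; combined with $\nabla_X e_A=\sum_B\epsilon_B\omega_{AB}(X)e_B$ and $\epsilon_i=1$ this gives $x_i=g_{-1}(\nabla_N N,e_i)=\omega_{n+1\,i}(e_{n+1})$. On the other hand, Cartan's relation $\theta_{n+1\,i}=-\sum_j h_{ij}\theta_j$ yields $\omega_{n+1\,i}(e_k)=-h_{ik}$. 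Because the coframe is dual to the frame, $\omega_A(e_B)=\delta_{AB}$, these two evaluations let me expand $\omega_{n+1\,i}=-\sum_j h_{ij}\omega_j+x_i\omega_{n+1}$.

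Next I would compute $d\omega_{n+1\,i}$ in two independent ways and contract each with $(e_k,e_{n+1})$. On one side, differentiating the expansion above and using the first structure equation $d\omega_A=\sum_B\epsilon_B\omega_B\wedge\omega_{BA}$ to rewrite $d\omega_j$ and $d\omega_{n+1}$, then applying $(\alpha\wedge\beta)(X,Y)=\alpha(X)\beta(Y)-\alpha(Y)\beta(X)$, produces $N(h_{ik})-\sum_j h_{ij}\omega_{kj}(N)+\sum_j h_{ij}h_{jk}+dx_i(e_k)-x_ix_k$. On the other side, the second structure equation $d\omega_{n+1\,i}=\sum_C\epsilon_C\omega_{n+1\,C}\wedge\omega_{C\,i}+\Omega_{n+1\,i}$ (with $\omega_{n+1\,n+1}=0$) evaluated on $(e_k,e_{n+1})$ gives $-\sum_j h_{jk}\omega_{ji}(N)-\sum_j x_j\omega_{ji}(e_k)+\Omega_{n+1\,i}(e_k,e_{n+1})$, where the curvature form contributes $-R_{n+1\,i\,n+1\,k}$ after using the antisymmetry $R_{ABCD}=-R_{ABDC}$ in the definition of $\Omega_{AB}$ together with $\epsilon_{n+1}=-1$.

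Finally I would equate the two evaluations and solve for the combination $(dx_i+\sum_j x_j\omega_{ji})(e_k)=dx_i(e_k)+\sum_j x_j\omega_{ji}(e_k)$. The term $-\sum_j x_j\omega_{ji}(e_k)$ coming from the $\sum_C\epsilon_C\omega_{n+1\,C}\wedge\omega_{C\,i}$ piece is precisely what assembles the tangential covariant-derivative combination on the left once it is moved across. Collecting the remaining terms and recognizing $N(h_{ik})+\sum_j h_{jk}\omega_{ji}(N)+\sum_j h_{ij}\omega_{jk}(N)=(dh_{ik}+\sum_j h_{jk}\omega_{ji}+\sum_j h_{ij}\omega_{jk})(e_{n+1})$ as the normal covariant derivative of the second fundamental form then delivers the stated identity.

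I expect the main obstacle to be the sign bookkeeping rather than any conceptual difficulty: one must keep $\epsilon_{n+1}=-1$ straight everywhere it enters (in $d\omega_j$, in the curvature form, and in the skew-symmetry $\omega_{AB}=-\omega_{BA}$), use the bare dual pairing $\omega_A(e_B)=\delta_{AB}$ with no metric factor when expanding $\omega_{n+1\,i}$, and correctly read off the collected $h$-terms as the normal derivative of $\mathcal{B}$. A secondary care point is matching the structure-equation coefficient $R_{n+1\,i\,n+1\,k}$ with $g_{-1}(R(N,e_i)N,e_k)$ through the pair symmetry of the curvature tensor, which is exactly where the stated sign of the curvature term is fixed.
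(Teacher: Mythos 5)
Your proposal is correct and follows essentially the same route as the paper: expand $\omega_{n+1\,i}=-\sum_j h_{ij}\omega_j+x_i\omega_{n+1}$ via the evaluations $\omega_{n+1\,i}(e_k)=-h_{ik}$ and $\omega_{n+1\,i}(N)=x_i$, compute $d\omega_{n+1\,i}$ once by direct differentiation and once by the second structure equation, and equate; your contraction against the pair $(e_k,e_{n+1})$ is just the paper's comparison of the coefficients of $\omega_k\wedge\omega_{n+1}$ (the residual $\omega_k\wedge\omega_l$ terms vanish on that pair), and your sign bookkeeping, including the contribution $-R_{n+1\,i\,n+1\,k}$ from $\Omega_{n+1\,i}$ with $\epsilon_{n+1}=-1$, checks out against the stated identity.
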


\begin{proof}
Choose an adapted frame field on a neighbourhood of a point of $M$ such that $e_{n+1}=N$. By (2.7) it follows that 
\begin{equation} \label{wnum}
\omega_{n+1i}=-\sum_{j=1}^{n}{h_{ij}\theta_{j}}+x_i\omega_{n+1},
\end{equation}
where 
\begin{equation}
x_i=\omega_{n+1i}(N)=g_{-1}(\nabla_N N,e_i).
\end{equation}

Denoting  $X= \nabla_N N$, we get 
\begin{equation} \label{xi}
x_i=g_{-1}(X,e_i).
\end{equation}

Let us represent  the exterior derivative of the leaf  $L$ of $\mathcal{F}$ by  $d_L$. If  $f$ is a function defined on any open set of $M$, where is defined an adapted frame it follows that
\begin{eqnarray}
df=d_Lf + N (f) \omega_{n+1}.
\end{eqnarray}

From (3.1) and (2.12) we get  
\begin{eqnarray*} 
d\omega_{n+1i} & = & d(-\sum_{j}{h_{ij}\theta_{j}}+x_i\omega_{n+1})\\
& = & -\sum_{j}{(dh_{ij}\wedge\theta_{j}+h_{ij}d\theta_{j})}+dx_i\wedge\omega_{n+1}+x_i\wedge d\omega_{n+1}.
\end{eqnarray*}

Using (3.4) we get 

\begin{eqnarray*}
d\omega_{n+1i} & =  & -\sum_{j}{(d_{L}h_{ij}+ N (h_{ij})\omega_{n+1})\wedge\theta_{j}}\\
&  &- \sum_{j}{h_{ij}d\theta_{j}}+dx_i\wedge\omega_{n+1}+x_i\wedge d\omega_{n+1}.
\end{eqnarray*}

By the structure equations and using (\ref{wnum}) we can write

$$\omega_{jn+1}=\sum_{k}{h_{jk}\theta_{k}}-x_j\omega_{n+1},$$

and 
\begin{equation} \label{dxi}
dx_{i}=\sum_{k}{a_{k}\omega_{k}}+a_{n+1}\omega_{n+1},
\end{equation}
where  $a_{B}=dx_{i}(e_{B})$.  Then 
$$dx_{i}\wedge \omega_{n+1}=\sum_{k}{dx_{i}(e_{k})\omega_{k}\wedge \omega_{n+1}}.$$

By the above expression and the fact that
$$\omega_{ji}=\sum_{l=1}^{n}{a_{l}\omega_{l}}+a_{n+1}\omega_{n+1},$$
where  $a_{k}=\omega_{ji}(e_{k})$ for $1\leq k\leq n+1$, it follows that

\begin{eqnarray*}
d\omega_{n+1i} & = & \sum_{j}{N(h_{ij})\theta_{j}\wedge\omega_{n+1}}+\sum_{j,k}{h_{ij}h_{jk}\theta_{k}\wedge\omega_{n+1}}\\
& - &  \sum_{j,k}{h_{ij}\omega_{k}\wedge\omega_{kj}}+dx_i\wedge\omega_{n+1}+\sum_{k}{x_i\omega_{k}\wedge\omega_{kn+1}}\\
& + &  \sum_{j}{\theta_{j} \wedge d_{L}h_{ij}}\\
& = &  \sum_{k}{\{N(h_{ik})+\sum_j{h_{ij}\omega_{jk}(e_{n+1})}+\sum_j{h_{ij}h_{jk}}}\\
& + & {dx_{i}(e_{k})-x_ix_k\}\theta_{k}\wedge\omega_{n+1}}+\mbox{terms in} \ \omega_{k}\wedge \omega_{l}. 
\end{eqnarray*}
On the other hand from  (2.2) and  (2.3)  we obtain
\begin{eqnarray*}
d\omega_{n+1i} & = & \sum_{C}{\epsilon_C\omega_{n+1C}\wedge \omega_{Ci}}+\Omega_{n+1i}\\
& = & \sum_{k}{\omega_{n+1k}\wedge \omega_{ki}}+\sum_{k}{R_{n+1i n+1k} \omega_{n+1}\wedge \omega_{k}}\\
& - & \frac{1}{2}\sum_{k,l}{R_{n+1ikl} \omega_{k} \wedge \omega_{l}}\\
& = & \sum_{k}{\{-\sum_j{h_{jk}\omega_{ji}(e_{n+1})}}-\sum_j{x_j\omega_{ji}(e_k)} -R_{n+1 i n+1 k}\} \\
& & \omega_{k}  \wedge \omega_{n+1} +  \mbox{terms in} \   \omega_{k}\wedge \omega_{l}.
\end{eqnarray*}

After comparing the two expressions of  $d\omega_{n+1i}$ we get

\begin{eqnarray*}
(dx_{i}+\sum_j{x_j\omega_{ji}})(e_{k}) & = & x_ix_k-\sum_j{h_{ij}h_{jk}}-R_{n+1 i n+1 k}\\
& - & (dh_{ik}+\sum_j{h_{jk}\omega_{ji}}+\sum_j{h_{ij}\omega_{jk}})(e_{n+1}). 
\end{eqnarray*}  

\end{proof} 

Using Proposition \ref{propone}  we get the following result for the spacetime that also includes  Theorem 2.1 in \cite{gomesumb}. We say that spacetime is timelike geodesically complete if every maximal timelike geodesic is defined on the entire real line. Let  $\mathbb{Q}_1^{n+1}(c)$ denote a connected, timelike geodesically complete spacetime with constant sectional curvature $c$.

\begin{theorem} \label {Theo 1}
Let $\mathcal{F}$ be a spacelike foliation by hypersurfaces of $\mathbb{Q}_1^{n+1}(c)$. If the normal field to the leaves of $\mathcal{F}$ is geodesic and if there exists a totally umbilical leaf of $\mathcal{F}$ then all  leaves of $\mathcal{F}$ are totally umbilical. 
\end{theorem}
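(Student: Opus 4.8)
The plan is to feed the two hypotheses into Proposition~\ref{propone}, read off an ordinary differential equation for the shape operator of the leaves along the normal geodesics, propagate umbilicity by a uniqueness argument, and finally globalize by connectedness.

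First I would exploit that $N$ is geodesic, i.e. $\nabla_N N=0$, so that $x_i=g_{-1}(\nabla_N N,e_i)=0$ and $dx_i=0$ for every $i$; then the left-hand side of the identity in Proposition~\ref{propone} vanishes, as does the term $x_ix_k$. Since the ambient space has constant sectional curvature $c$, the curvature tensor gives $R(N,e_i)N=c\,e_i$, hence $R_{n+1\,i\,n+1\,k}=c\,\delta_{ik}$ (consistent with $\ric(N)=-nc$ from (2.4)). Proposition~\ref{propone} therefore collapses to
$$
\Bigl(dh_{ik}+\sum_j h_{jk}\omega_{ji}+\sum_j h_{ij}\omega_{jk}\Bigr)(N)=-\sum_j h_{ij}h_{jk}-c\,\delta_{ik}.
$$

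Next I would fix a point $p_0$ on the given totally umbilical leaf $L_0$ and the normal geodesic $\gamma$ with $\gamma(0)=p_0$ and $\gamma'(0)=N$; by timelike geodesic completeness $\gamma$ is defined on all of $\mathbb{R}$. Parallel transporting an orthonormal basis of $T_{p_0}L_0$ along $\gamma$ produces a frame $\{e_1(t),\dots,e_n(t)\}$ which stays orthonormal and, since $\tfrac{d}{dt}g_{-1}(e_i,N)=g_{-1}(\nabla_N e_i,N)+g_{-1}(e_i,\nabla_N N)=0$, stays tangent to the leaves. In this frame every $\omega_{ji}(N)$ vanishes, so the displayed identity becomes the matrix Riccati equation $S'(t)=-S(t)^2-c\,I$ for the shape operator $S(t)=(h_{ik}(t))$. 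Because $L_0$ is totally umbilical, $S(0)=\lambda_0 I$; letting $\lambda(t)$ solve the scalar equation $\lambda'=-\lambda^2-c$ with $\lambda(0)=\lambda_0$, the matrix $\lambda(t)I$ solves the same Riccati problem with the same initial datum, so uniqueness of solutions forces $S(t)=\lambda(t)I$ for all $t$ (the solution cannot blow up in finite time, since $S(t)$ is a geometric quantity defined wherever the foliation is, hence on all of $\mathbb{R}$, and this bounds $\lambda$). Thus the shape operator stays umbilical along $\gamma$, so every leaf met by a normal geodesic issuing from $L_0$ is totally umbilical.

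Finally I would globalize using the flow $\psi_t$ of $N$. This flow is complete by hypothesis and, because $\nabla_N N=0$ yields $g_{-1}([N,X],N)=0$ for every $X\perp N$, it preserves the distribution $N^\perp=T\mathcal{F}$ and hence carries leaves to leaves; by the previous step each leaf $\psi_t(L_0)$ is totally umbilical. The saturated set $W=\bigcup_{t\in\mathbb{R}}\psi_t(L_0)$ is then a union of totally umbilical leaves, and using the local product structure of $\mathcal{F}$ together with the transversal flow one checks that $W$ is both open and closed, so $W=\mathbb{Q}_1^{n+1}(c)$ by connectedness and every leaf is totally umbilical. I expect the main obstacle to be twofold: on the local side, the bookkeeping that turns Proposition~\ref{propone} into the clean Riccati equation, especially the choice of the parallel frame to kill the connection terms $\omega_{ji}(N)$; and on the global side, the verification that $W$ is closed, i.e. that the normal geodesics from $L_0$ sweep out every leaf, where the completeness assumption is exactly what makes the argument go through.
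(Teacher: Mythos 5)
Your proposal is correct and takes essentially the same route as the paper: Proposition~\ref{propone} with $\nabla_N N=0$ and constant curvature collapses to the Riccati equation $dh_{ik}(N)=-\sum_j h_{ij}h_{jk}-c\,\delta_{ik}$ along the integral curves of $N$, and ODE uniqueness with the umbilical initial condition propagates umbilicity, with timelike geodesic completeness and connectedness giving the global conclusion. Your parallel-frame matrix-Riccati formulation and explicit open/closed globalization are simply more careful renderings of the paper's corresponding steps, where the authors instead diagonalize $\mathcal{B}$, obtain the scalar equations $h_{ii}^2+c+dh_{ii}(N)=0$, and conclude by invoking completeness and connectedness without further detail.
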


\begin{proof}
Let  $x\in \mathbb{Q}_1^{n+1}(c)$ and $\{e_1,...,e_n,e_{n+1}\}$ an adapted orthonormal frame defined  on a neighbourhood $U\subset \mathbb{Q}_1^{n+1}(c)$ such that $U \cap L\neq \emptyset$, where  $L$ is an umbilical leaf of  $\mathcal{F}$. Observe that $\{e_1,...,e_n\}$ can be chosen  in order to diagonalize the second fundamental form $\mathcal{B}$. Since the normal field  $N$ is geodesic, we obtain the following differential equation
\begin{equation}
h_{ii}^2+c+dh_{ii}(N)=0.
\end{equation}

Then the principal curvatures of the leaves satisfy the same differential equation on the integral curve $\gamma$ of $N$, on $U$. Since one leaf of $\mathcal{F}$ is  totally umbilical we have the same  initial condition for all  differential equations, when varying $i$. Using the theorem of existence and unicity of differential equations we obtain that  $h_{11}=...=h_{nn}$ on  $U$. Then the result follows from the fact that $\mathbb{Q}_1^{n+1}(c)$ is timelike geodesically complete and connected.    
\end{proof}

Let us show that the hypothesis requiring the curvature of the ambient manifold to be constant, on the previous theorem, cannot be removed. The construction of the counterexample was inspired in the paper due to Gomes \cite{gomesumb}.
\begin{remark}
	Consider $\mathbb{R}^3$ endowed with the Lorentz metric $ds^2=-dz^2+g_z(dx,dy)$, where $g_z$ is a smooth family of metrics in $\mathbb{R}^2$ depending on $z$. For our purposes consider   $g_z$ a smooth metric such that it is constant for $z \in [0,1]$ and not constant elsewhere. Then $(\mathbb{R}^3, ds^2)$  does not have constant curvature and can be foliated by the spacelike planes $z=\mbox{const}$. The vector field $N:=\frac{\partial}{\partial z}$ is unitary concerning the metric   $ds^2$  and is a geodesic field (recall that a unitary vector field has geodesic orbits if and only if its flow preserves the perpendicular plane field).  Recall that a point $(x,y,z)$  is umbilical  if and only if $\frac{\partial g}{\partial z}= \lambda g_z$. Then,  for instance, the plane $z=\frac{1}{2}$ is umbilical (actually totally geodesic), but the planes $z= k$ for $ k < 0$ or $k >1$  may not be umbilical since they may not satisfy the last condition.
\end{remark}

\section{Characterization of totally geodesic spacelike foliations on Lorentz manifolds} 
In this section, we study foliations by spacelike hypersurfaces $\mathcal{F}$ on a Lorentz manifold. We analyze the questions mentioned at the introduction of this paper, but first, we find a fundamental equation that relates the manifold with its spacelike leaves. 
\begin{proposition} \label{proptwo}
	Let $\mathcal{F}$ be a spacelike foliation by hypersurfaces on a Lorentz manifold $M$ and let $N$ be a unit field normal to the leaves of $\mathcal{F}$ on some open set $U$ of $M$. Then on $U$ we have
	\begin{equation}\label{divN}
		\Div N= nH;
	\end{equation}
	
	\begin{equation} \label{divL}
		\Div_L(\nabla_NN)=nN(H)-\|\mathcal{B}\|^2+n\ric(N)+\|\nabla_NN\|^2;
	\end{equation}
	
	\begin{equation}\label{divX}
		\Div \nabla_NN= \Div_L \nabla_NN+ \|\nabla_NN\|^2,
	\end{equation}
	where $H$ is the mean curvature on the direction  $N$.
\end{proposition}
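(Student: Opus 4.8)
The plan is to prove the three identities in turn, viewing \eqref{divN} and \eqref{divX} as short preliminary computations and \eqref{divL} as the substantive statement, for which Proposition \ref{propone} is the essential input. Throughout I fix an adapted frame $\{e_1,\dots,e_n,e_{n+1}\}$ with $e_{n+1}=N$, so that $\epsilon_i=+1$ for $i\le n$ and $\epsilon_{n+1}=-1$, and I set $X=\nabla_N N$. Since $N$ is unit timelike, $g_{-1}(\nabla_N N,N)=\tfrac12 N\big(g_{-1}(N,N)\big)=0$, so $X$ is tangent to the leaves and I may write $X=\sum_i x_i e_i$ with $x_i=g_{-1}(\nabla_N N,e_i)$, in agreement with \eqref{xi}.

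For \eqref{divN} I would expand the divergence in the adapted frame, $\Div N=\sum_{i}g_{-1}(\nabla_{e_i}N,e_i)-g_{-1}(\nabla_N N,N)$, drop the last term by the unit-length computation above, and use $\nabla_{e_i}N=\sum_B\epsilon_B\omega_{n+1\,B}(e_i)e_B$ together with (2.7) to get $g_{-1}(\nabla_{e_i}N,e_i)=\omega_{n+1\,i}(e_i)=-h_{ii}$. Summing gives $\Div N=-\sum_i h_{ii}=-\operatorname{tr}A=nH$, by the convention $H=-\tfrac1n\operatorname{tr}A$. For \eqref{divX} I would use that $X$ is tangent: the full divergence splits as $\Div X=\sum_i g_{-1}(\nabla_{e_i}X,e_i)-g_{-1}(\nabla_N X,N)=\Div_L X-g_{-1}(\nabla_N X,N)$, the tangential sum being $\Div_L X$ by the definition of the leaf divergence. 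Differentiating the identity $g_{-1}(X,N)\equiv 0$ along $N$ yields $g_{-1}(\nabla_N X,N)=-g_{-1}(X,\nabla_N N)=-\|X\|^2$, so that $\Div X=\Div_L X+\|\nabla_N N\|^2$, which is \eqref{divX}.

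The heart of the matter is \eqref{divL}. Expanding $\Div_L X=\sum_k g_{-1}(\nabla_{e_k}X,e_k)$ with $X=\sum_i x_i e_i$ and the connection relation $\nabla_{e_k}e_i=\sum_B\epsilon_B\omega_{iB}(e_k)e_B$, I would reduce it to $\Div_L X=\sum_k\big(dx_k+\sum_j x_j\,\omega_{jk}\big)(e_k)$. The key observation is that this is exactly the left-hand side of the identity in Proposition \ref{propone} specialized to $i=k$ and summed over $k$. Substituting the right-hand side of that proposition and contracting term by term then produces the four summands: $\sum_k x_k^2=\|\nabla_N N\|^2$; $-\sum_{j,k}h_{kj}h_{jk}=-\|\mathcal{B}\|^2$; the curvature contraction $-\sum_k R_{n+1\,k\,n+1\,k}$, which the defining relation (2.4) identifies with the Ricci curvature of $M$ in the direction $N$; and the last group $-\sum_k\big(dh_{kk}+\sum_j h_{jk}\omega_{jk}+\sum_j h_{kj}\omega_{jk}\big)(e_{n+1})$, in which the two connection-form sums cancel because $h_{jk}$ is symmetric while $\omega_{jk}$ is skew, leaving $-\sum_k N(h_{kk})=-N(\operatorname{tr}A)=nN(H)$.

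I expect the main obstacle to be organizing \eqref{divL} so as not to redo the computation already contained in Proposition \ref{propone}; the decisive step is recognizing $\Div_L(\nabla_N N)$ as the diagonal trace of that one-form identity, after which only algebraic contractions remain. The two places that demand care are the skew-symmetry cancellation of the $\omega_{jk}(e_{n+1})$ terms and the consistent tracking of signs coming from $\epsilon_{n+1}=-1$ and from the convention $H=-\tfrac1n\operatorname{tr}A$, since these are what make the mean-curvature and Ricci terms emerge with the signs stated in \eqref{divL}.
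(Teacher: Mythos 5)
Your proposal is correct and follows essentially the same route as the paper: \eqref{divN} and \eqref{divX} by direct adapted-frame computations (with the $\epsilon_{n+1}$-term killed by $g_{-1}(\nabla_N N,N)=0$), and \eqref{divL} by reducing $\Div_L(\nabla_N N)$ to $\sum_k\bigl(dx_k+\sum_j x_j\omega_{jk}\bigr)(e_k)$ and tracing Proposition \ref{propone} at $i=k$, with the symmetric-times-skew cancellation of the $\omega_{jk}(e_{n+1})$ terms that the paper leaves implicit. The one point to flag — reading the contraction $-\sum_k R_{n+1\,k\,n+1\,k}$ as $n\ric(N)$, whereas (2.4) as literally stated gives $\ric(N)$ without the factor $n$ — is a normalization step you inherit verbatim from the paper's own proof, so it is not a gap in your argument relative to it.
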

\begin{proof}
From (\ref{dxi}) we can write
\begin{eqnarray*}
dx_{i}(e_k) & = & d_{L}x_{i}(e_k).
\end{eqnarray*}
As $\mathcal{F}$ is spacelike let we derive (\ref{xi}) with respect to  $e_{i}$ and using the fact that  $(\nabla_{e_{i}}e_{i})^T=\sum_{j}{\langle \nabla_{e_i}e_{i},e_{j}\rangle}e_{j}$ we get 
\begin{eqnarray*}
e_{i}(x_i) & = & e_{i}(g_{-1}(\nabla_NN,e_i))  =  g_{-1}(\nabla_{e_{i}}\nabla_NN,e_i)+g_{-1}(\nabla_NN,\nabla_{e_{i}}e_i);\\
\end{eqnarray*}
On the other hand, 
\begin{eqnarray*}
(\sum_{j}{x_j\omega_{ji}})(e_i) & = & \sum_{j}{g_{-1}(\nabla_NN,e_j) g_{-1} (\nabla_{e_i}e_j,e_i)}\\
& = & -g_{-1}(\nabla_NN,\sum_{j}{g_{-1}(\nabla_{e_i}e_i,e_j) e_j)}\\
& = & -g_{-1}(\nabla_NN,\nabla_{e_i}e_i).
\end{eqnarray*}
From these two equations  we conclude that
\begin{eqnarray*}
g_{-1}(\nabla_{e_{i}}\nabla_NN,e_{i}) & = & (d_{L}x_i+\sum{x_j\omega_{ji}})(e_i).
\end{eqnarray*}
It follows from Proposition \ref{propone} that 
\begin{eqnarray*}
\Div_{L}(\nabla_NN) & = & \sum_{i=1}^n{x_{i}^2}-\sum_{i,j=1}^n{h_{ij}^2}-\sum_{i=1}^n{R_{n+1 i n+1 i}}-N(\sum_{i=1}^n{h_{ii}})\\
& = & \|\nabla_NN\|^2-\left\|\mathcal{B}\right\|^2+n\ric(N)+nN(H).
\end{eqnarray*}
To proof  (\ref{divN}), we use  (2.10) and  obtain  
\begin{eqnarray*}
\Div(N) & = & \sum_{A=1}^{n+1}{\epsilon_A g_{-1}(\nabla_{e_A}N,e_A)}\\
& = & \sum_{i=1}^{n}{\theta_{n+1i}(e_i)}\\
& = & \sum_{i=1}^n{-h_{ii}\theta_{i}(e_i)}\\
& = & nH.
\end{eqnarray*}
To prove  (\ref{divX}) observe that  if  $V$ is a vector field on  $M$ tangent to the leaves of $\mathcal{F}$, then 
$$\Div V=\sum_{i=1}^n{g_{-1}(\nabla_{e_{i}}V,e_i)}-g_{-1}(\nabla_{N}V,N)=\Div_L V+g_{-1}(V,\nabla_{N}N).$$ 
Taking  $V=\nabla_{N}N$, we finish the proof.  
\end{proof} 

Let us analyze spacelike foliations $\mathcal{F}$ on a timelike geodesically complete Lorentz manifold $M$. Now we define a number $\mathfrak{G}_{\mathcal{F}}$ by:    
\begin{eqnarray*}
\mathfrak{G}_{\mathcal{F}}= \inf_{M} \left\{ \frac{1}{n} {\rm div}_{\mathcal{F}}(\nabla_{N}N)-{\rm Ric}(N)-\frac{2}{n}\|\nabla_NN\|^2\right\}.
\end{eqnarray*}
Using this number we estimate the mean curvature function.
\begin{theorem}\label{thmthree}
Let $\mathcal{F}$ be a spacelike foliation by hypersurfaces on a timelike geodesically complete Lorentz manifold $M$. Then 
\begin{align}
\label{eqeight}
\mathfrak{G}_{\mathcal{F}} \leq 0;\\
\label{eqnine}
	H^2 \leq -\mathfrak{G}_{\mathcal{F}}
\end{align}
and
$$\mathcal{F} \ \mbox{is totally geodesic} \Leftrightarrow \mathfrak{G}_{\mathcal{F}}=0.$$
\end{theorem}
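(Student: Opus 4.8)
The plan is to first rewrite $\mathfrak{G}_{\mathcal{F}}$ in a form amenable to a comparison argument along the normal congruence. Substituting the key identity \eqref{divL} of Proposition \ref{proptwo} into the definition of $\mathfrak{G}_{\mathcal{F}}$, the terms $\ric(N)$ cancel and one is left with
$$\mathfrak{G}_{\mathcal{F}}=\inf_{M}\left\{N(H)-\tfrac{1}{n}\|\mathcal{B}\|^2-\tfrac{1}{n}\|\nabla_NN\|^2\right\}.$$
I would then record the elementary algebraic inequality $\|\mathcal{B}\|^2\ge nH^2$, obtained from Cauchy--Schwarz applied to the eigenvalues $h_{11},\dots,h_{nn}$ of the shape operator after diagonalizing $\mathcal{B}$, with equality at a point precisely when the leaf is totally umbilical there. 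In particular the bracketed quantity is bounded above by $N(H)-H^2$, and from the definition of the infimum,
$$N(H)\ \ge\ \mathfrak{G}_{\mathcal{F}}+\tfrac{1}{n}\|\mathcal{B}\|^2+\tfrac{1}{n}\|\nabla_NN\|^2\ \ge\ \mathfrak{G}_{\mathcal{F}}+H^2 \quad\text{on } M.$$

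The heart of the argument is a Riccati-type comparison along a maximal integral curve $\gamma$ of $N$. Since $N$ is unit timelike and $M$ is timelike geodesically complete, I would argue that $\gamma$ is defined on all of $\mathbb{R}$, and set $f(t)=H(\gamma(t))$, so that $f'=N(H)$ and the inequality above reads $f'\ge f^2+\mathfrak{G}_{\mathcal{F}}$ (we may assume $\mathfrak{G}_{\mathcal{F}}$ finite, the case $-\infty$ being trivial). If $\mathfrak{G}_{\mathcal{F}}=a>0$, comparison with the solution of $y'=y^2+a$, namely $\sqrt{a}\tan(\sqrt{a}\,t)$ up to translation, which blows up in finite time, forces $f\to+\infty$ in finite forward time, contradicting that $f$ is finite on all of $\mathbb{R}$; this proves \eqref{eqeight}. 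For \eqref{eqnine}, set $b=-\mathfrak{G}_{\mathcal{F}}\ge0$ and suppose $H(p)^2>b$ at some $p=\gamma(0)$: if $f(0)>\sqrt{b}$ the inequality $f'\ge f^2-b$ again yields forward blow-up, while if $f(0)<-\sqrt{b}$ the substitution $t\mapsto-f(-t)$ satisfies the same inequality and produces backward blow-up; either way completeness is contradicted, so $H^2\le-\mathfrak{G}_{\mathcal{F}}$ everywhere.

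Finally I would treat the characterization. For $\mathfrak{G}_{\mathcal{F}}=0\Rightarrow\mathcal{F}$ totally geodesic: by \eqref{eqnine} one gets $H\equiv0$, hence $N(H)\equiv0$, so the bracketed function reduces to $-\tfrac{1}{n}\|\mathcal{B}\|^2-\tfrac{1}{n}\|\nabla_NN\|^2\le0$; since its infimum is $0$ it must vanish identically, forcing $\mathcal{B}\equiv0$ (and in fact $\nabla_NN\equiv0$ as well). Conversely, if $\mathcal{F}$ is totally geodesic then $\mathcal{B}\equiv0$, whence $H\equiv0$ and $N(H)\equiv0$, and the bracketed function becomes $-\tfrac{1}{n}\|\nabla_NN\|^2$, so that $\mathfrak{G}_{\mathcal{F}}=-\tfrac{1}{n}\sup_M\|\nabla_NN\|^2$.

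I anticipate two main obstacles. First, the comparison requires the integral curves of $N$ to be complete, whereas what is assumed is timelike geodesic completeness; since $N$ need not be geodesic, I would have to justify this extension carefully, and this is the delicate analytic point. Second, the converse half of the characterization only delivers $\mathfrak{G}_{\mathcal{F}}=0$ once one knows $\nabla_NN\equiv0$, i.e.\ that the normal congruence is itself geodesic; showing that this is forced under the operative notion of a totally geodesic foliation here is, I expect, the crux of the equivalence and the step demanding the most care.
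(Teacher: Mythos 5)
Your argument for \eqref{eqeight} and \eqref{eqnine} is essentially the paper's own. The authors likewise feed \eqref{divL} into the definition of $\mathfrak{G}_{\mathcal{F}}$ (so that $\ric(N)$ cancels), use $\|\mathcal{B}\|^2\geq nH^2$ (implicitly) to arrive at the pointwise inequality $N(H)\geq H^2+\mathfrak{G}_{\mathcal{F}}$, and derive a contradiction from this Riccati inequality along an integral curve of $N$. The only difference is bookkeeping: where you compare with the explicit blow-up solutions of $y'=y^2+a$ and $y'=y^2-b$, they normalize $H_{\gamma(0)}\geq 0$ by choosing the orientation of $N$ and apply the mean value theorem to $-1/H_{\gamma(s)}$ and to $\frac{1}{2a}\ln\bigl(\frac{H_{\gamma(s)}-a}{H_{\gamma(s)}+a}\bigr)$; your time-reversal trick replaces their flip of $N$. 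Your proof of the direction $\mathfrak{G}_{\mathcal{F}}=0\Rightarrow\mathcal{F}$ totally geodesic is also exactly theirs.

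The two obstacles you flag at the end are genuine, and they are gaps in the paper's proof, not merely in yours. First, the paper simply asserts that an integral curve of $N$ ``may be extended to all $\mathbb{R}$'' because $M$ is timelike geodesically complete; but $N$ is in general a non-geodesic unit timelike congruence, and timelike geodesic completeness does not control its flow. Concretely, in the Minkowski plane $(\mathbb{R}^2,\,dx^2-dt^2)$ the spacelike foliation by the curves $t=\ln\cosh x+c$ has unit normal $N=\sinh x\,\partial_x+\cosh x\,\partial_t$, whose integral curves solve $\dot{x}=\sinh x$ and escape in finite parameter time, even though the ambient space is complete in every sense; so the extension step needs either an added hypothesis (completeness of the normal flow) or a separate argument, and neither you nor the paper supplies one. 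Second, for the converse of the characterization the paper offers only the sentence ``the converse follows from Proposition \ref{proptwo},'' and your computation shows why that cannot suffice: for a totally geodesic foliation one gets $\mathfrak{G}_{\mathcal{F}}=-\frac{1}{n}\sup_M\|\nabla_NN\|^2$, which vanishes only when the normal congruence is geodesic. Total geodesy of the leaves does not force $\nabla_NN\equiv 0$: for a static metric $-\alpha(x)^2dt^2+dx^2$ with $\alpha$ bounded between positive constants and $\alpha'\not\equiv 0$, the leaves $t=\mathrm{const}$ are totally geodesic while $\nabla_NN=(\alpha'/\alpha)\,\partial_x\neq 0$, and one checks such a surface is timelike geodesically complete (conserved energy plus bounded Christoffel symbols keep coordinate velocities bounded). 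So the step you rightly call the crux does not appear fillable as stated; the equivalence seems to require $N$ geodesic, or some comparable hypothesis, in addition. In short: your proposal establishes exactly what the paper's proof establishes, by the same method, and the points you leave open are precisely the points where the published proof is itself incomplete.
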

\begin{proof}
Suppose on the contrary that we have $\mathfrak{G}_{\mathcal{F}}>0$ on $M$. Then
	$$\frac{1}{n} {\rm div}(\nabla_{N}N)-{\rm Ric}(N)-\frac{2}{n}\|\nabla_NN\|^2>0.$$
By Proposition \ref{proptwo}, we can say that the following inequality holds on $M$,
	$$dH(N)>H^2.$$
	Let $\gamma(s)$ be an integral curve of the unit vector field $N$. Since $M$ is timelike geodesically complete, $\gamma$ may be extended to all $\mathbb{R}$. Thus, along $\gamma$, the above inequality has the form 
	$$(H_{\gamma(s)})^{'}>(H_{\gamma(s)})^{2}, \ \ \ \forall s\in \mathbb{R}.$$
We can choose the field $N$ such that $H_{\gamma(0)}\geq 0$. Notice that such a choice does not change the expression 
$$\frac{1}{n} {\rm div}(\nabla_{N}N)-{\rm Ric}(N)-\frac{2}{n}\|\nabla_NN\|^2.$$
Therefore the following inequalities are valid for every $s>0$,
\begin{equation}
(H_{\gamma(s)})^{'}>(H_{\gamma(s)})^2>0 \ \ \mbox{e} \ \ \frac{(H_{\gamma(s)})^{'}}{(H_{\gamma(s)})^2}>1.
\label{eqthree}
\end{equation}
Now consider the real $G$ function given by 
$$G(s)=-\frac{1}{H_{\gamma(s)}}, \ \ s>0.$$
Take a fixed  $b>0$ and apply the mean value theorem for the function $G$ in the interval $[b,s]$, to obtain
$$-\frac{1}{H_{\gamma(s)}}+\frac{1}{H_{\gamma(b)}}=\frac{(H_{\gamma(\xi)})^{'}}{(H_{\gamma(\xi)}^{\alpha})^2}(s-b),$$
where $\xi \in (b,s)$. Consequently by \eqref{eqthree}, for all $b<s$, we have
$$ -\frac{1}{H_{\gamma(s)}}+\frac{1}{H_{\gamma(b)}}>s-b.$$
As $s$ tend to infinity, the right side of this inequality is unlimited while the left side is limited, which is a contradiction. This proves (\ref{eqeight}). 

To prove (\ref{eqnine}), suppose also on the contrary that there exists a point $p\in M$ such that,
$$(H_{p})^2 > - \mathfrak{G}_{\mathcal{F}}.$$
If $\mathfrak{G}_{\mathcal{F}}=-\infty$ then there is nothing to be proved. By (\ref{eqeight}), we can say that for some $a\geq 0$ we have,
$$\mathfrak{G}_{\mathcal{F}}=-a^2,$$  
where $a=0$ or $a>0$.
If $a>0$, we have from the hypothesis that $(H_p)^{2}-a^2>0.$ Let $\gamma$ be an integral curve of $N$ such that $\gamma(0)=p$.
As before we can choose a direction $N$ such that $H_p=H_{\gamma(0)}\geq 0$ and then, 
$$H_p=H_{\gamma(0)}>a.$$
By continuity, there exists a maximal interval $[0,b)$ where,  
$$(H_p)^{2}-a^2>0, \ \ \ \forall s\in [0,b).$$
We claim that $b= +\infty$.

In fact, if $b<+\infty$, by continuity we should have $(H_{\gamma(b)})^{2}=a^2$. But, from Proposition (\ref{proptwo}), we have:
$$(H_{\gamma(s)})^{'} \geq (H_{\gamma(s)})^{2}-a^2 > 0, \ \ \ \forall s\in [0,b).$$
Thus we conclude that $H_{\gamma(s)}$ is a strictly increasing function in $[0,b]$, which is a contradiction. 
Therefore, the following inequalities are valid for every $s>0$,
$$H_{\gamma(s)}>0, \ \ (H_{\gamma(s)})^{'}\geq (H_{\gamma(s)})^2-a^2>0 \ \mbox{and} \ \frac{(H_{\gamma(s)})^{'}}{(H_{\gamma(s)})^2-a^2}\geq 1.$$
Let us consider the function $L$ defined by,
$$L(s)=\frac{1}{2a}\ln(\frac{H_{\gamma(s)}-a}{H_{\gamma(s)}+a}), \ \ s>0.$$ 
For a fixed $b>0$ and using the mean value theorem, there exists $c\in [b,s]$ such that 
$$\frac{1}{2a}\ln(\frac{H_{\gamma(s)}-a}{H_{\gamma(s)}+a})-\frac{1}{2a}\ln(\frac{H_{\gamma(b)}-a}{H_{\gamma(b)}+a})=\frac{(H_{\gamma(c)})^{'}}{(H_{\gamma(c)})^2-a^2}(s-b).$$
Consequently, for all $s>b$, we have
$$\frac{1}{2a}\ln(\frac{H_{\gamma(s)}-a}{H_{\gamma(s)}+a})-\frac{1}{2a}\ln(\frac{H_{\gamma(b)}-a}{H_{\gamma(b)}+a})\geq s-b.$$
Finally let $s$ tend to infinity and get a contradiction, because the left side is limited while the right side is unlimited. The case $a=0$ is similar.

Finally, suppose that
 $$\mathfrak{G}_{\mathcal{F}}=0.$$
It follows, from equation \eqref{eqnine}, that 
$$(H_{\mathcal{F}})^2\leq -\mathfrak{G}_{\mathcal{F}}=0.$$
Therefore, $H_{\mathcal{F}}\equiv 0$ and  by Proposition \ref{proptwo} we have
$$0\geq \frac{1}{n} {\rm div}_{\mathcal{F}}(\nabla_{N}N)-{\rm Ric}(N)-\frac{2}{n}\|\nabla_NN\|^2 \geq \mathfrak{G}_{\mathcal{F}}=0.$$
Thus,  
\begin{eqnarray*}
0 & = & \frac{1}{n} {\rm div}_{\mathcal{F}}(\nabla_{N}N)-{\rm Ric}(N)-\frac{2}{n}\|\nabla_NN\|^2,
\end{eqnarray*}
on $M$. Using Proposition \ref{proptwo} we conclude that $\left\|\mathcal{B}\right\|=0$ and $\mathcal{F}$ is a totally geodesic foliation. The converse follows from Proposition \ref{proptwo}.
\end{proof}

\begin{corollary}
 Let $\mathcal{F}$ be a spacelike foliation by hypersurfaces of a complete Lorentz manifold $M$ with constant  sectional curvature $c$ and with a timelike vector field $N$ normal to the leaves of $\mathcal{F}$. If $N$ is a geodesic flow, then
 \begin{enumerate}
  \item $c\geq 0$;
  \item $H^2\leq c$.
 \end{enumerate}
\end{corollary}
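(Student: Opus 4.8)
The plan is to read this off directly from Theorem \ref{thmthree}. That theorem requires the ambient manifold to be timelike geodesically complete, and since a complete Lorentz manifold is in particular timelike geodesically complete, the hypotheses of Theorem \ref{thmthree} are met. So the whole corollary reduces to showing that, under the stated hypotheses, the invariant $\mathfrak{G}_{\mathcal{F}}$ collapses to $-c$, after which both conclusions are immediate.

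First I would use that $N$ is a geodesic flow, which for a unit field means $\nabla_N N = 0$ on all of $M$. Substituting this into
$$\mathfrak{G}_{\mathcal{F}}= \inf_{M} \left\{ \tfrac{1}{n}\,\mathrm{div}_{\mathcal{F}}(\nabla_{N}N)-\ric(N)-\tfrac{2}{n}\|\nabla_NN\|^2\right\}$$
kills two of the three terms, since $\mathrm{div}_{\mathcal{F}}(\nabla_N N)$ and $\|\nabla_N N\|^2$ vanish identically. Hence $\mathfrak{G}_{\mathcal{F}} = \inf_M\{-\ric(N)\} = -\sup_M \ric(N)$. Next I would evaluate $\ric(N)$ using that $M$ has constant sectional curvature $c$: choosing an adapted frame $\{e_1,\dots,e_n,N\}$ with the $e_i$ spanning the spacelike tangent space of a leaf and $N$ the timelike unit normal, each plane spanned by $N$ and $e_i$ is nondegenerate with sectional curvature $c$; reading the components $R_{n+1\,i\,n+1\,i}$ off the constant-curvature form of the curvature tensor and assembling them through the expression for the Ricci curvature in the direction $N$ from Section $2$ shows that $\ric(N)$ is constant and equal to $c$. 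Since this value is independent of the point, $\sup_M \ric(N)=c$ and therefore $\mathfrak{G}_{\mathcal{F}} = -c$.

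Finally I would feed $\mathfrak{G}_{\mathcal{F}}=-c$ into Theorem \ref{thmthree}: the bound $\mathfrak{G}_{\mathcal{F}}\le 0$ becomes $-c\le 0$, i.e.\ $c\ge 0$, which is item (1); and the estimate $H^2\le-\mathfrak{G}_{\mathcal{F}}$ becomes $H^2\le c$, which is item (2). The step I expect to be the main obstacle is the curvature bookkeeping in the middle paragraph: one must keep careful track of the signs forced by the timelike normal, where $g_{-1}(N,N)=-1$, and by the spacelike leaf directions when passing from the constant-curvature form of $R$ to the value of $\ric(N)$ that actually enters $\mathfrak{G}_{\mathcal{F}}$, because a single misplaced sign would flip the inequality in item (1) and destroy the obstruction for the anti-de Sitter case. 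Once that value is pinned down, everything else is a direct substitution into the already-established Theorem \ref{thmthree}.
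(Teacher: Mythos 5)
Your proposal is correct and is essentially the paper's intended argument: the corollary is stated there as an immediate consequence of Theorem \ref{thmthree}, obtained exactly as you do by using $\nabla_N N=0$ to reduce $\mathfrak{G}_{\mathcal{F}}$ to $\inf_M\{-\ric(N)\}=-c$ in constant curvature $c$, whence $c\geq 0$ and $H^2\leq c$. Your sign bookkeeping lands on the value of $\ric(N)$ consistent with the paper's conventions (compare the equation $h_{ii}^2+c+dh_{ii}(N)=0$ in the proof of Theorem \ref{Theo 1}), so nothing is missing.
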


\section{Some applications of the maximum principle to spacelike foliations on Lorentz manifolds}

Yau  \cite{yauomori}, established the following version of Stokes's theorem on an $n$-dimensional, complete non compact Riemannian manifold $M^n$: if $\omega \in \Omega^{n-1}(M)$ is an integrable $(n-1)$-differential form on $M^n$, then there exists a sequence $B_i$ of domains on $M$ such that $B_i\subset B_{i+1}$, 
$ M^n = \cup_{i\geq 1} B_{i}$ and 
$$\lim_{i \rightarrow +\infty}{\int_{B_i}d\omega}=0.$$
Now, suppose that $M^n$ is oriented by the volume element $dM$. If $\omega = i_XdM$ is the contraction of $dM$ in the direction of a smooth vector field $X$ on $M^n$, then Caminha et al.  \cite{caminha}, extended a result obtained by  Yau  that furnishes a version of Stokes Theorem  for complete non compact  Riemannian manifolds. They obtained a suitable consequence of Yau's result, which is described below. In what follows, $\mathcal{L}^1(M)$ stands for the space of Lebesgue integrable functions on $M^n$.
\begin{lemma}\label{lemmacaminha}
Let $X$ be a smooth vector field on the $n$-dimensional complete, non compact, oriented Riemannian manifold $M^n$, such that $\Div_MX$ does not change sign on $M^n$. If $\|X\|\in \mathcal{L}^1(M)$, then $\Div_MX=0$. 
\end{lemma}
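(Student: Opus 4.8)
The plan is to apply the version of Stokes' theorem due to Yau stated above to the $(n-1)$-form obtained by contracting the volume element with $X$. Concretely, I would set $\omega = i_X dM \in \Omega^{n-1}(M)$. The first task is to identify $d\omega$: by Cartan's magic formula $L_X dM = d(i_X dM) + i_X(d\,dM)$, and since $dM$ is a top-degree form we have $d\,dM = 0$, whence $L_X dM = d\omega$. Combining this with the defining property of the divergence, $L_X dM = (\Div_M X)\,dM$, yields the key identity
$$d\omega = (\Div_M X)\,dM.$$
This is the only computation needed to translate the hypothesis on $\Div_M X$ into a statement about $d\omega$.

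Next I would verify the integrability hypothesis required to invoke Yau's result. The relevant point is that contraction with the volume element is a pointwise isometry between $T_pM$ and $\Lambda^{n-1}T_p^*M$ (it agrees up to sign with $\star\,X^\flat$, and both $\flat$ and the Hodge star are isometries), so that $\|\omega\|_p = \|X\|_p$ at every $p \in M$. Hence $\|X\|\in\mathcal{L}^1(M)$ forces $\omega$ to be an integrable $(n-1)$-form on $M^n$, and Yau's theorem supplies an exhaustion $B_1\subset B_2\subset\cdots$ with $M=\bigcup_{i\geq1}B_i$ and
$$\lim_{i\to+\infty}\int_{B_i} d\omega = \lim_{i\to+\infty}\int_{B_i}(\Div_M X)\,dM = 0.$$

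The final step upgrades this limit along the exhaustion to a statement on all of $M$ by using the sign condition. I may assume without loss of generality that $\Div_M X\geq 0$ everywhere, since otherwise one replaces $X$ by $-X$, which reverses the sign of $\Div_M X$ while leaving $\|X\|$ unchanged. Because $B_i\uparrow M$ and $\Div_M X\geq 0$, the monotone convergence theorem gives
$$\int_M(\Div_M X)\,dM = \lim_{i\to+\infty}\int_{B_i}(\Div_M X)\,dM = 0.$$
A nonnegative continuous function whose integral over $M$ vanishes must be identically zero, so $\Div_M X\equiv 0$, as desired.

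I expect the only delicate point to be the passage from the limit along the particular sequence $\{B_i\}$ furnished by Yau's theorem to the integral over all of $M$; this is precisely where the sign hypothesis on $\Div_M X$ is indispensable, as it is what allows the monotone convergence theorem to be applied to the increasing exhaustion. The two ingredients feeding into it—the identity $d(i_X dM)=(\Div_M X)\,dM$ and the norm equality $\|i_X dM\|=\|X\|$—are standard and pose no genuine obstacle.
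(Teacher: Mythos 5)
Your proof is correct and is essentially the argument of the cited source: the paper states this lemma without proof, attributing it to Caminha et al.\ \cite{caminha}, and the proof there is exactly your route --- take $\omega = i_X dM$, use $d(i_X dM) = (\Div_M X)\,dM$ and the pointwise norm identity to invoke Yau's exhaustion $\{B_i\}$, then upgrade $\lim_i \int_{B_i} (\Div_M X)\,dM = 0$ to $\int_M (\Div_M X)\,dM = 0$ via the sign hypothesis and monotone convergence. You also correctly isolated the one genuinely delicate step (passing from the limit along the exhaustion to the full integral), so there is nothing to fix.
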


Using their result and  equation (\ref{divL}), we get an obstruction for the existence of spacelike foliations on a Lorentz manifold. Here we say that $\|X\| \in \mathcal{L}^1(\mathcal{F})$ if and only if $\|X^{\top}\| \in \mathcal{L}^1(L)$ for each leaf $L$ of the foliation $\mathcal{F}$, where $X^\top$ means the tangent projection of field $X$ on the leaf $L$.
\begin{theorem} Let $M$ be a time-oriented Lorentz manifold with positive Ricci curvature. Let $\mathcal{F}$ be a spacelike foliation by hypersurfaces of $M$ with $\| \nabla_N N\| \in \mathcal{L}^1(\mathcal{F})$. Then $\mathcal{F}$ cannot have all leaves totally geodesic with one being complete. 
\end{theorem}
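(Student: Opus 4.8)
The plan is to argue by contradiction, feeding the totally geodesic hypothesis into the identity \eqref{divL} and then invoking Lemma \ref{lemmacaminha}. So suppose every leaf of $\mathcal{F}$ is totally geodesic and that one of them, say $L$, is complete. Totally geodesic means the second fundamental form vanishes, so $\|\mathcal{B}\|^2=0$ and the mean curvature $H$ is identically zero, whence $N(H)=0$ as well. Substituting into \eqref{divL} collapses the right-hand side to
\begin{equation*}
\Div_L(\nabla_N N)=n\,\ric(N)+\|\nabla_N N\|^2 .
\end{equation*}
Since $M$ has positive Ricci curvature in the direction $N$ and $\|\nabla_N N\|^2\geq 0$, the right-hand side is strictly positive, so $\Div_L(\nabla_N N)>0$ at every point of $L$; in particular it does not change sign.

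Next I would observe that $X:=\nabla_N N$ is tangent to the leaves: differentiating $g_{-1}(N,N)\equiv -1$ gives $g_{-1}(\nabla_N N,N)=\tfrac12 N\big(g_{-1}(N,N)\big)=0$, so $X$ has no normal component and $X^{\top}=X$ on each leaf. Hence the hypothesis $\|\nabla_N N\|\in\mathcal{L}^1(\mathcal{F})$ says exactly that $\|X\|\in\mathcal{L}^1(L)$. Moreover, for a field tangent to $L$ the quantity $\Div_L$ of (2.11) coincides with the intrinsic divergence of $X$ for the Levi--Civita connection of the induced Riemannian metric on $L$, because the normal part of $\nabla_{e_i}X$ is annihilated upon pairing with the tangent vector $e_i$. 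Thus $X|_L$ is a smooth vector field on the complete Riemannian manifold $L$ whose intrinsic divergence is strictly positive and whose norm is Lebesgue integrable, which is precisely the setting of Lemma \ref{lemmacaminha}.

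It then remains to separate two regimes. If $L$ is compact, the classical divergence theorem forces $\int_L \Div_L X\,dL=0$, incompatible with $\Div_L X>0$. If $L$ is non-compact, I would apply Lemma \ref{lemmacaminha} with $M^n=L$ and this $X$: its divergence has constant sign and its norm lies in $\mathcal{L}^1(L)$, so the lemma yields $\Div_L X\equiv 0$, again contradicting $\Div_L X>0$. Either way the assumption is untenable, which proves that $\mathcal{F}$ cannot have all leaves totally geodesic with one complete.

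The one genuinely delicate point is the orientability hypothesis in Lemma \ref{lemmacaminha}: time-orientation of $M$ furnishes a global unit normal $N$ and hence two-sidedness of each spacelike leaf, but it does not by itself make $L$ orientable. I expect to dispose of this by passing to the orientation double cover $\pi\colon\widetilde L\to L$, which is complete, oriented, non-compact whenever $L$ is, and on which $\pi^{*}X$ satisfies $\Div_{\widetilde L}\pi^{*}X=\pi^{*}\big(\Div_L X\big)>0$ with $\|\pi^{*}X\|\in\mathcal{L}^1(\widetilde L)$ since the cover is at most two-to-one; Lemma \ref{lemmacaminha} then gives $\Div_{\widetilde L}\pi^{*}X\equiv 0$, which descends to the same contradiction on $L$. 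Apart from this orientability bookkeeping and the verification that $\Div_L$ in \eqref{divL} is the intrinsic divergence to which the lemma applies, the argument is a direct substitution into \eqref{divL}.
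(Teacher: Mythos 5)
Your argument is correct and follows essentially the same route as the paper's own proof: substitute the totally geodesic hypothesis into \eqref{divL} to get $\Div_L(\nabla_N N)=n\ric(N)+\|\nabla_N N\|^2>0$, then contradict via Stokes in the compact case and via Lemma \ref{lemmacaminha} in the complete non-compact case. Your additional checks---that $\nabla_N N$ is tangent to the leaves, that $\Div_L$ agrees with the intrinsic divergence of $L$, and the orientation double cover to handle the orientability hypothesis of the lemma (which the paper quietly sidesteps by writing ``compact and orientable'')---are sound refinements of the same proof, not a different one.
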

\begin{proof}
	Suppose on the contrary there exists a totally geodesic spacelike foliation $\mathcal{F}$ of $M$ with at least a complete leaf $L$. Then  $L$ is compact or $L$ is complete and noncompact. 
	If $L$ is  compact and orientable, applying equation (\ref{divL})  and Stokes  theorem we obtain
	$$0=\int_L{\Div_L(\nabla_NN)}=\int_L{n\ric(N)+\|\nabla_NN\|^2.}$$  
But this may not occur because the integral on the right side is positive.

If $L$ is complete and non  compact, as $\mathcal{F}$ is totally geodesic we get from equation  (\ref{divL}), that
$$\Div_L(\nabla_NN)=n\ric(N)+\|\nabla_NN\|^2,$$
does not changes sign. Since  $\|\nabla_NN\|$ is Lebesgue  integrable, from Lemma \ref{lemmacaminha} it follows that $\Div_L(\nabla_NN)=0$, which may not occur.
\end{proof}
As a corollary of the above result we obtain also: 

\begin{corollary}
Let $\mathcal{F}$ be a spacelike foliation by hypersufaces on the de Sitter space with $\|\nabla_N N\| \in \mathcal{L}^1(\mathcal{F})$. Then $\mathcal{F}$ cannot have all leaves totally geodesic with one being complete.
\end{corollary}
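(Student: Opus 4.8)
The plan is to deduce this from the preceding theorem by checking that the de Sitter space meets all of its hypotheses; no new computation about the foliation itself is needed, since the integrability assumption $\|\nabla_N N\|\in\mathcal{L}^1(\mathcal{F})$ is already built into the statement. So the whole proof reduces to verifying that $\mathbb{S}^{n+1}_1$ is a time-oriented Lorentz manifold with positive Ricci curvature, and then invoking the theorem verbatim.

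First I would recall the basic structure of $\mathbb{S}^{n+1}_1 = \{x\in\mathbb{R}^{n+2}_1 : g_{-1}(x,x)=c,\ c>0\}$: with the induced metric it is a connected, geodesically complete Lorentz manifold of constant sectional curvature $c>0$. I would also note that it is time-orientable, since it carries a globally defined timelike vector field inherited from the causal structure of the ambient Lorentz--Minkowski space $\mathbb{R}^{n+2}_1$ (equivalently, it is diffeomorphic to $\mathbb{R}\times S^n$, the $\mathbb{R}$-factor furnishing a time orientation). Thus $M=\mathbb{S}^{n+1}_1$ is a time-oriented Lorentz manifold.

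The one genuinely substantive step is the sign of the normal Ricci curvature. For a space of constant sectional curvature $c$, plugging the constant-curvature form of $R$ into the definition in (2.4) gives, for the unit timelike normal $N$ and any orthonormal spacelike frame $\{e_1,\dots,e_n\}$ tangent to the leaves, that $\ric(N)=-\sum_{i=1}^n R_{n+1\,i\,n+1\,i}$ is a positive multiple of $c$; with the sign convention used throughout the paper (the same convention that forces $\ric(N)\ge 0 \Leftrightarrow c\ge 0$ in the corollary of Section 4) one obtains $\ric(N)=nc$. Since $c>0$, the de Sitter space has strictly positive Ricci curvature in the direction of $N$.

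Having established that $M=\mathbb{S}^{n+1}_1$ is a time-oriented Lorentz manifold with positive Ricci curvature, the previous theorem applies directly and yields exactly the desired conclusion: such a foliation $\mathcal{F}$ cannot have all leaves totally geodesic with one being complete. The hard part here is not the argument but the bookkeeping of signs: I would double-check that the paper's curvature convention makes $\ric(N)$ have the same sign as $c$ (hence positive for de Sitter), because the opposite convention would instead place de Sitter under a negative-Ricci regime and the theorem would not apply.
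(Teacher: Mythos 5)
Your proposal is correct and takes essentially the same route as the paper, which states this corollary without proof immediately after the theorem precisely because the whole content is that de Sitter space is a time-oriented Lorentz manifold whose Ricci curvature in the timelike normal direction is a positive multiple of $c>0$ under the paper's conventions, so the theorem applies verbatim. Your explicit sign-convention check (anchored to the Section 4 corollary, which forces $c\geq 0$) is a worthwhile precaution rather than a deviation, especially since the introduction's restatement of the theorem says ``negative'' Ricci curvature where Section 5 says ``positive.''
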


Now we assume that $\mathfrak{G}_{\mathcal{F}}$ is finite and use the well known maximum principle due to Yau \cite{yauomori}, to get:

\begin{corollary} 
Let  $\mathcal{F}$ be a spacelike foliation on a timelike geodesically complete Lorentz manifold $M$. If $L$ is a complete leaf of $\mathcal{F}$ with Ricci curvature bounded from below then there exists a sequence of points  $\{p_k\} \in L$ such that  

\begin{enumerate}
\item $\displaystyle{\lim_{k\to \infty}H_{L}(p_k)}=\sup_{L} H_L$;
\item $\displaystyle{\lim_{k\to \infty}\|\nabla H_{L}(p_k)\|}=0$;
\item $\displaystyle{\lim_{k\to \infty}\Delta H_L(p_k)}\leq 0$.
\end{enumerate}
\end{corollary}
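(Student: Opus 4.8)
The plan is to recognize this statement as a direct application of the Omori--Yau maximum principle to the mean curvature function restricted to the leaf $L$. Recall the version of the principle established by Yau \cite{yauomori}: if $(L,g)$ is a complete Riemannian manifold whose Ricci curvature is bounded from below and $u\in C^2(L)$ is bounded from above, then there is a sequence $\{p_k\}\subset L$ realizing simultaneously $u(p_k)\to \sup_L u$, $\|\nabla u(p_k)\|\to 0$ and $\Delta u(p_k)\leq o(1)$. Thus it suffices to produce a smooth, bounded-above function on $L$ whose sup-attaining sequence yields the three claimed limits, and the natural candidate is $u=H_L$, the restriction to $L$ of the mean curvature function $H$ of the foliation in the direction of $N$. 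Since the foliation, the field $N$, and the second fundamental form are all smooth, $H_L$ is smooth on $L$, so it is an admissible test function for the principle.

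The only hypothesis of the maximum principle that is not immediate is the boundedness of $u=H_L$ from above. Here I would invoke Theorem \ref{thmthree}: since $M$ is timelike geodesically complete and $\mathfrak{G}_{\mathcal{F}}$ is assumed finite, the estimate \eqref{eqnine} gives $H^2\leq -\mathfrak{G}_{\mathcal{F}}$ on all of $M$. As $\mathfrak{G}_{\mathcal{F}}\leq 0$ by \eqref{eqeight}, the right-hand side $-\mathfrak{G}_{\mathcal{F}}$ is a finite nonnegative constant, so that $|H|\leq \sqrt{-\mathfrak{G}_{\mathcal{F}}}$ everywhere; in particular the restriction $H_L$ is bounded, both from above and from below, on the leaf $L$. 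The remaining hypotheses---completeness of $L$ and a lower bound on its Ricci curvature---are precisely the standing assumptions on $L$ in the statement.

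With these verifications in place, I would simply apply the Omori--Yau principle with $u=H_L$ to obtain a sequence $\{p_k\}\subset L$ satisfying $H_L(p_k)\to \sup_L H_L$, $\|\nabla H_L(p_k)\|\to 0$ and $\Delta H_L(p_k)\leq o(1)$, which are exactly conclusions $(1)$, $(2)$ and $(3)$. The genuine content of the corollary is therefore not in the maximum principle itself but in the preparatory reduction: the finiteness of $\mathfrak{G}_{\mathcal{F}}$ converts the intrinsic curvature quantity appearing in its definition into a uniform a priori bound on $H$, and this is what makes $H_L$ an admissible, bounded test function. I expect this boundedness reduction via Theorem \ref{thmthree} to be the only real step; once it is secured, the conclusion follows directly from Yau's principle.
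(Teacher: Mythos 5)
Your proposal is correct and follows essentially the same route as the paper: the paper's proof likewise uses the finiteness of $\mathfrak{G}_{\mathcal{F}}$ (a standing assumption stated just before the corollary) together with Theorem \ref{thmthree} to conclude that $H_L^2$ is bounded, and then applies Yau's maximum principle to $H_L$. Your write-up merely makes explicit the verification of the hypotheses (smoothness of $H_L$, completeness of $L$, Ricci bounded below) that the paper leaves implicit.
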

\begin{proof}
From Theorem  \ref{thmthree} it follows that  $H_L^2$ is bounded, since  $\mathfrak{G}_{\mathcal{F}}$ is finite. Using  Yau  \cite{yauomori}, we finish the proof.
\end{proof}
\noindent {\bf Acknowledgments.}  Both authors are grateful to Antonio Caminha for interesting and useful discussions. This work was carried out during their visit in 2018 and 2019 a visit at Universidade Federal do Ceará.

\end{document}